\documentclass[12pt]{article}
\usepackage{tikz}
\usetikzlibrary{arrows}
\usepackage[framemethod=tikz]{mdframed}
\usepackage{wrapfig}
\usepackage{amsmath,amssymb}
\usepackage{type1cm}
\usepackage{amsthm}
\usepackage{mathrsfs}
\usepackage{mathtools}
\usepackage{enumerate}
\usepackage[all]{xy} 
\usepackage[vcentermath,enableskew]{youngtab}
\usepackage{ytableau}
\usepackage{diagbox}
\usepackage{longtable}
\AtBeginDocument{%
   \def\MR#1{}
}

\DeclareMathOperator{\GSp}{GSp}

\DeclareMathOperator{\diag}{diag}

\DeclareMathOperator{\Adm}{Adm}

\DeclareMathOperator{\pr}{pr}

\DeclareMathOperator{\supp}{supp}

\DeclareMathOperator{\LP}{LP}

\newcommand\F{\mathbb{F}}

\newcommand\cA{\mathscr A}

\newcommand\cC{C}

\newcommand\Q{\mathbb Q}
\newcommand\bQp{\breve{\Q}_p}
\newcommand\R{\mathbb R}

\newcommand\Z{\mathbb Z}
\newcommand\tW{\tilde W}
\newcommand\tw{\tilde w}

\newcommand\bS{\mathbb S}
\newcommand\SW{{^{\bS}\tilde W}}
\newcommand\SAdm{{^{\bS}\mathrm{Adm}}}

\newcommand\tS{\tilde{\mathbb S}}

\newcommand\vph{\varphi}

\newcommand\la{\langle}
\newcommand\ra{\rangle}

\theoremstyle{definition}
\newtheorem{theo}{Theorem}[section]
\newtheorem{prop}[theo]{Proposition}

\newtheorem{lemm}[theo]{Lemma}

\newtheorem{thm}{Theorem}[section]

\begin{document}
\title{The number of Ekedahl-Oort strata intersecting the supersingular locus in the Siegel modular variety}
\author{Ryosuke Shimada}
\date{}
\maketitle

\begin{abstract}
Let $N_g$ be the number of Ekedahl-Oort strata intersecting the supersingular locus in the Siegel modular variety.
We prove that $N_g\sim 2^{g-1}$ as $g\to\infty$.
\end{abstract}

\section{Introduction}
\label{introduction}
Let $p$ be a prime and let $\cA_g$ denote the moduli space of principally polarized abelian varieties of dimension $g$ over $\overline{\F}_p$.
Let $\mathscr S_g\subset\cA_g$ denote the supersingular locus.
The {\it Ekedahl-Oort stratification} of $\cA_g$ is defined by the isomorphism class of the $p$-torsion $A[p]$.
By \cite[Theorem 9.4]{Oort01}, these strata are indexed by {\it elementary sequences}
$$\vph=(\vph(1),\ldots,\vph(g)),\qquad \vph(0)=0,\quad \vph(i)-\vph(i-1)\in\{0,1\}.$$
Thus there are $2^g$ Ekedahl-Oort strata in $\cA_g$.
We denote the stratum corresponding to $\vph$ by $EO_\vph$ and set
$$N_g=\sharp\{\vph\mid EO_\vph\cap\mathscr S_g\neq\emptyset\}.$$

By \cite[Proposition 2.5(1)]{GC12}, we have $EO_\vph\subseteq\mathscr S_g$ if and only if $\vph\left(\left\lceil\frac{g}{2}\right\rceil\right)=0$.
Therefore,
$$\sharp\{\vph\mid EO_\vph\subseteq\mathscr S_g\}=2^{\left\lfloor\frac{g}{2}\right\rfloor}.$$
For $g=3$ or $4$, the strata intersecting $\mathscr S_g$ were determined in \cite{ST24} using affine Deligne-Lusztig varieties.
In this paper, we study $N_g$ as $g$ tends to infinity.

For an abelian variety $A$ over $\overline{\F}_p$, its {\it $p$-rank} $f(A)$ is defined by $A[p](\overline{\F}_p)\cong(\Z/p\Z)^{f(A)}$.
Since the $p$-rank depends only on $A[p]$, it is constant on each Ekedahl-Oort stratum, with value $\max\{i\mid\vph(i)=i\}$ on $EO_\vph$.
Hence it is zero if and only if $\vph(1)=0$, and there are $2^{g-1}$ such strata.
Since every supersingular abelian variety has $p$-rank $0$, $EO_\vph\cap\mathscr S_g\neq\emptyset$ implies $\vph(1)=0$.
Therefore $N_g\leq 2^{g-1}$.
Our main result says that asymptotically almost all of these strata intersect $\mathscr S_g$.
\begin{thm}[Theorem \ref{asymptotic}]
\label{main thm}
We have
$$N_g\sim 2^{g-1}\quad\text{and}\quad \lim_{g\to\infty}\frac{N_{g+1}}{N_g}=2.$$
\end{thm}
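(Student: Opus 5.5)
Since $N_g\le 2^{g-1}$ is already known, it suffices to find a subset $T_g$ of $\{\vph\mid \vph(1)=0\}$ with $\sharp T_g\sim 2^{g-1}$ such that $EO_\vph\cap\mathscr S_g\neq\emptyset$ for every $\vph\in T_g$. The ratio statement then follows: from $N_g/2^{g-1}\to1$ we get $N_{g+1}/N_g=2\cdot(N_{g+1}/2^g)/(N_g/2^{g-1})\to 2$. So everything rests on a good sufficient criterion for nonemptiness of $EO_\vph\cap\mathscr S_g$.

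To obtain such a criterion I would work group-theoretically, in the spirit of \cite{ST24}. First, identify Ekedahl-Oort strata with elements of the set $\SW\cap W\tau\,W$ of minimal-length representatives in $\tW$ for $\GSp_{2g}$, with $\mu$ the minuscule Siegel cocharacter. Second, translate the question into one about affine Deligne-Lusztig varieties: $EO_w\cap\mathscr S_g\neq\emptyset$ if and only if $X_w(b)\neq\emptyset$ for $b$ basic, viewing $w$ as an element of $\SAdm(\mu)$ (Görtz-He type fine Deligne-Lusztig description). Third, apply a known nonemptiness criterion for basic $b$. The cleanest available tool is the He/Görtz-He-Nie result: for $w$ in the shrunken Weyl chamber, $X_w(b)\neq\emptyset$ for basic $b$ iff $w$ is not contained in a proper $\sigma$-stable parabolic in the sense of the "$P$-alcove"/support condition. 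Concretely, writing $w=x t^{\lambda} y$ with suitable $x,y\in W$, the criterion becomes: $\supp_\sigma(\eta(w))$ is all of $\bS$, where $\eta(w)=y x$ (or its variant). For $w$ outside the shrunken chamber I would instead reduce via the Deligne-Lusztig reduction to smaller Levi and argue separately, but I hope to avoid this by only counting good $w$.

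Next I would translate this support condition back into the combinatorics of elementary sequences. Write $\vph$ via its associated permutation in $W\cong S_g\ltimes(\Z/2)^g$ (Oort's final type/the $\psi_\vph$ permutation of $\{1,\ldots,2g\}$). The condition that $\eta(w)$ has full support should amount to saying that the Weyl group element $\eta(w)$ moves every simple reflection index, i.e. for each $1\le i\le g$ some ``crossing'' occurs at position $i$. For a random $\vph$ with $\vph(1)=0$ (i.e. a uniformly random $0/1$ increment string with a forced initial $0$), such a local condition at each index should fail only with probability decaying in $g$: I expect the failure at index $i$ to require a specific rigid pattern (e.g. $\vph$ being constant or strictly increasing on a long block around $i$, or the sequence splitting off a $p$-rank-like or superspecial-like summand), which has probability exponentially small in the block length. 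A union bound over $i$ together with boundary effects should give $\sharp(\{\vph(1)=0\}\setminus T_g)=o(2^{g-1})$; alternatively the exceptional $\vph$ form a set counted by a linear recursion with dominant root strictly less than $2$.

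The main obstacle is the middle step: making the support computation for $\eta(w)$ explicit in terms of $\vph$, and checking that the $w$ I count lie in the regime (e.g. shrunken chamber, or straight after reduction) where the clean criterion applies. If the support condition is messier than a local pattern, I would fall back to proving only the sufficient direction for a structured family, e.g. those $\vph$ whose increment string contains the word $0\,1$ in every window of bounded length, and show this family already has density $\to1$ among sequences with $\vph(1)=0$.
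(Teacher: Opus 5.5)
There is a genuine gap: the nonemptiness criterion you plan to use applies to no Ekedahl--Oort stratum, and the counting has no mechanism behind it. Your outer reductions are fine: $N_g\le 2^{g-1}$, the ratio limit follows from $N_g/2^{g-1}\to1$, and $EO_\vph\cap\mathscr S_g\neq\emptyset$ iff $X_{\tw_\vph}(\tau)\neq\emptyset$.

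\textbf{The criterion.} An alcove in the shrunken Weyl chambers lies in a different $\alpha$-strip from the base alcove for every root $\alpha$. It is therefore separated from the base alcove by at least $\sharp\Phi_+=g^2$ affine root hyperplanes. But every $w\in\Adm(\mu)$ satisfies $\ell(w)\le\ell(p^\mu)=\sum_{\alpha\in\Phi_+}|\la\alpha,\mu\ra|=g(g+1)/2<g^2$ once $g\ge2$. So your set of ``good $w$'' is empty, and the whole argument falls onto the unspecified Levi-reduction fallback.

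Checking full support of a single Weyl group element is not enough either. For $\nu=(0,1,0,\ldots,0)$ we have $1\in\LP(\tw_\nu)$, and $y_\nu=\pr(\tw_\nu)$ has full support, since its only fixed point in $\overline{C^+}$ is $0$. Yet $X_{\tw_\nu}(\tau)=\emptyset$.

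The missing ingredient is a criterion valid for all $w\in\SAdm(\mu)$. The paper uses Schremmer's criterion (Theorem \ref{empty}), which requires full support of $v^{-1}y_\nu v$ for \emph{every} $v\in\LP(\tw_\nu)$. It then shows that $\LP(\tw_\nu)$ is exactly the set of $v$ with $v\overline{C^+}\subseteq D$, and that these chambers cover $D$. This turns the criterion into the cone condition $\cC_\nu=\{0\}$ (Proposition \ref{cone criterion}).

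\textbf{The counting.} The obstruction $\cC_\nu\neq\{0\}$ is global, not a local pattern you can union-bound over indices. When $\nu(1)=0$, let $j$ be the least index with $x_j>0$ for a nonzero $x\in\cC_\nu$; then $j$ lies in the initial run of zeros. Moreover $x$ equals $\pm x_j$ along the whole cycle of $\pi_\nu$ through $j$, and that cycle's necklace has only even runs of $b$.

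For instance, take the paper's $\nu=(0,0,1,0,1,0)$. The cycle $(1\ 3\ 6)$ carries $e_1+e_3-e_6\in\cC_\nu$, because $\pi_\nu$ sends $1$ to $\vph(g)+1=3$ and $3$ to $g=6$. This is a coincidence involving both ends of $\nu$, not a local pattern.

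The paper handles this in three steps:
\begin{enumerate}[(1)]
\item It discards $\nu$ with a run of zeros longer than $M=\lceil3\log_2g\rceil$.
\item It uses $\pi_\nu(i)\le(M+1)i$ to force that cycle to have length at least $\log(g-M+2)/\log(M+1)$ (Lemma \ref{long cycle}).
\item It bounds the number of $\nu$ with such a cycle by a Fibonacci count of necklaces together with injectivity of the necklace decomposition (Lemma \ref{necklace count}).
\end{enumerate}

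Your fallback family (containing $01$ in every bounded window) is never shown to lie in the nonempty set. The hoped-for linear recursion with dominant root below $2$ also looks implausible. In Table \ref{values}, $g\,(1-N_g/2^{g-1})$ stays near $3$ for $10\le g\le 38$, which points to decay of order $1/g$ rather than geometric decay.
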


The strategy of the proof is as follows:
We first use affine Deligne-Lusztig varieties to obtain a useful criterion for $EO_\vph\cap\mathscr S_g\neq\emptyset$ (cf.\ Proposition \ref{cone criterion}).
We then use this criterion to estimate $N_g$.
This criterion also shows that $N_g<2^{g-1}$ for $g\geq3$.

The author shared a preliminary version of this paper with Naotake Takao and Shushi Harashita on September 6, 2026, at 21:50 PDT.
The author then submitted the paper for publication on September 8, 2026, at 21:31 PDT, without first uploading it to arXiv.
The author later became aware of the closely related work of Muller \cite{Muller26}, which appeared on arXiv after this submission.

\textbf{Acknowledgments:}
The author would like to thank Shushi Harashita and Naotake Takao for helpful comments.

The author was supported by JSPS KAKENHI Grant number JP25K23334.
The author worked on this paper during a stay at UC Berkeley supported by the JSPS
Overseas Research Fellowship. The author would like to thank the university and
his host, Sug Woo Shin, for their hospitality.

OpenAI’s ChatGPT was used to assist with mathematical discussion, language,
and bibliographic searches.

\section{Preliminaries}
\label{preliminaries}
\subsection{Notation}
\label{notation}
Let $\bQp$ be the completion of the maximal unramified extension of $\Q_p$, let $\breve{\Z}_p$ be its ring of integers, and let $\sigma$ be the Frobenius automorphism of $\bQp/\Q_p$.
Set $G=\GSp_{2g}$, and let $T\subset B\subset G$ be the diagonal torus and the upper triangular Borel subgroup, with the symplectic form as in \cite[\S2.3]{GC10}.
Then
$$X_*(T)=\{(m_1,\ldots,m_{2g})\in\Z^{2g}\mid m_1+m_{2g}=m_2+m_{2g-1}=\cdots=m_g+m_{g+1}\}.$$
For $\lambda=(m_1,\ldots,m_{2g})\in X_*(T)$, we write $p^\lambda=\diag(p^{m_1},\ldots,p^{m_{2g}})$.
Set $\mu=(1^{(g)},0^{(g)})\in X_*(T)$, where $1$ and $0$ are repeated $g$ times.
Set
$$\tau=\begin{pmatrix}0&p\cdot 1_g\\1_g&0\end{pmatrix}\in G(\Q_p),$$
where $1_g$ is the identity matrix of size $g$.

Let $W_0$ denote the finite Weyl group and let $\tW=N_G(T)(\bQp)/T(\breve{\Z}_p)$ be the Iwahori-Weyl group.
We also regard $\tau$ as an element of $\tW$.
We identify
$$W_0=\{w\in\mathfrak S_{2g}\mid w(i)+w(2g+1-i)=2g+1\text{ for all }i\}.$$
We have $\tW=X_*(T)\rtimes W_0$ and denote the projection to $W_0$ by $\pr$.
Set
$$s_i=(i\ i+1)(2g-i\ 2g+1-i)\ (1\leq i<g),\qquad s_g=(g\ g+1).$$
Then $\bS=\{s_1,\ldots,s_g\}$ is the set of simple reflections of $W_0$.
The affine Weyl group $W_a$ has simple reflections $\tS=\bS\cup\{s_0\}$, where
$$s_0=p^{(1,0^{(2g-2)},-1)}(1\ 2g).$$
We have $\tW=W_a\rtimes\Omega$, where $\Omega=\langle\tau\rangle\cong\Z$ is the subgroup of length $0$ elements.
We write $\ell$ and $\leq$ for the length function and the Bruhat order on $\tW$.
For $J\subseteq\tS$, let $W_J$ be the subgroup generated by $J$, and let $^J\tW$ be the set of minimal length representatives for $W_J\backslash\tW$.

The $\sigma$-conjugacy class of $\tau$ is basic, and
$$\tau s_i\tau^{-1}=s_{g-i}\qquad(0\leq i\leq g).$$
For $u\in W_a$, let $\supp(u)$ denote the simple reflections occurring in a reduced expression of $u$.
We define $\supp_\sigma(u\tau)$ to be the smallest $\tau$-stable subset of $\tS$ containing $\supp(u)$.

Let $\Phi=\Phi(G,T)$ denote the set of roots of $T$ in $G$.
We denote by $\Phi_+$ the set of positive roots distinguished by $B$.
We use the reflection representation
$$V=X_*(T)_\R/X_*(Z(G))_\R\cong\R^g.$$
On $V$, the image of $\mu$ is $(\frac{1}{2},\ldots,\frac{1}{2})$, the reflections $s_i$ with $i<g$ interchange the $i$-th and $(i+1)$-th coordinates, and $s_g$ changes the sign of the last coordinate.
Writing $e_1,\ldots,e_g$ for the standard basis and identifying $V$ with its dual, we have
$$\Phi_+=\{e_i-e_j,e_i+e_j\mid 1\leq i<j\leq g\}\cup\{2e_i\mid 1\leq i\leq g\}.$$
The closed dominant chamber is $\overline{C^+}=\{x=(x_1,\ldots,x_g)\in V\mid x_1\geq\cdots\geq x_g\geq0\}$.

\subsection{Affine Deligne-Lusztig Varieties}
\label{ADLV}
Set $K=G(\breve{\Z}_p)$, and let $I\subset K$ be the inverse image of the lower triangular Borel subgroup under reduction modulo $p$.
For $w\in\tW$, we have the {\it affine Deligne-Lusztig varieties}
\begin{align*}
X_w(\tau)&=\{xI\in G(\bQp)/I\mid x^{-1}\tau\sigma(x)\in IwI\},\\
X_\mu(\tau)&=\{xK\in G(\bQp)/K\mid x^{-1}\tau\sigma(x)\in Kp^\mu K\}.
\end{align*}
Set
$$\Adm(\mu)=\{w\in\tW\mid w\leq p^{v\mu}\text{ for some }v\in W_0\},\qquad
\SAdm(\mu)=\Adm(\mu)\cap\SW.$$
Since $\mu$ is minuscule, the Ekedahl-Oort stratification is
$$X_\mu(\tau)=\bigsqcup_{w\in\SAdm(\mu)}\pi(X_w(\tau)),$$
where $\pi\colon G(\bQp)/I\to G(\bQp)/K$ is the projection (cf.\ \cite[Theorem 3.2.1]{GH15}).

In the Siegel case, there are three parametrizations of the Ekedahl-Oort strata (cf.\ \cite[\S2.4]{GC12}).
Set $J=\bS\setminus\{s_g\}$.
Let $^J W_0$ denote the set of minimal length representatives for $W_J\backslash W_0$.
There is a bijection $\vph\mapsto w_\vph$ from elementary sequences to $^J W_0$ characterized by
$$\vph(i)=\sharp\{1\leq j\leq i\mid w_\vph(j)>g\}\qquad(1\leq i\leq g).$$

The bijection $^J W_0\xrightarrow{\sim}\SAdm(\mu)$ is given by
$$w_\vph\longmapsto\tw_\vph\coloneqq\tau w_\vph.$$
By \cite[Lemma 7.6]{GHN19}, we have
$$EO_\vph\cap\mathscr S_g\neq\emptyset\quad\Longleftrightarrow\quad X_{\tw_\vph}(\tau)\neq\emptyset.$$

Let $\delta^+\colon\Phi\to\{0,1\}$ be the indicator function of $\Phi_+$.
For $w=p^\mu y\in\SAdm(\mu)$, define
$$\LP(w)=\{v\in W_0\mid \la v\alpha,y^{-1}\mu\ra+\delta^+(v\alpha)-\delta^+(yv\alpha)\geq0
\text{ for all }\alpha\in\Phi_+\}.$$
The following non-emptiness criterion was proved by Schremmer in \cite[Proposition 6]{Schremmer23}; see also \cite[Theorem 2.5]{ST24}.
\begin{theo}
\label{empty}
Let $w\in\SAdm(\mu)$.
Then $X_w(\tau)\neq\emptyset$ if and only if at least one of the following conditions holds:
\begin{enumerate}[(i)]
\item $W_{\supp_\sigma(w)}$ is finite.
\item $\supp(v^{-1}\pr(w)v)=\bS$ for every $v\in\LP(w)$.
\end{enumerate}
\end{theo}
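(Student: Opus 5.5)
Since Theorem \ref{empty} is quoted from \cite[Proposition 6]{Schremmer23}, here is how I would reprove it for our minuscule, basic situation, rather than a new argument. The plan splits into the finite-support case, the emptiness direction (some $v\in\LP(w)$ has $\supp(v^{-1}\pr(w)v)\subsetneq\bS$), and the non-emptiness direction. Throughout I use the Deligne--Lusztig reduction method of He together with the $P$-alcove formalism of G\"ortz--He--Nie. Write $w=p^\mu y$, so that $y=\pr(w)$.

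\emph{Case (i).} Suppose $W_{\supp_\sigma(w)}$ is finite. Put $J=\supp_\sigma(w)$. Then $w\in W_J\tau$, and $J$ is a spherical $\tau$-stable subset of $\tS$. Let $P_J$ be the corresponding parahoric. The fibre of $X_w(\tau)$ over the base point $P_J$ is essentially a classical Deligne--Lusztig variety for the reductive quotient of $P_J$ with Frobenius $\mathrm{Ad}(\tau)\circ\sigma$, and such varieties are always nonempty. So $X_w(\tau)\neq\emptyset$.

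\emph{Emptiness when (i) and (ii) both fail.} Take $v\in\LP(w)$ with $J':=\supp(v^{-1}yv)\subsetneq\bS$. The defining inequalities of $\LP(w)$ are exactly the length-positivity conditions. Unwinding them root by root, I expect they say that $w$ is a $(J',v)$-alcove element in the sense of G\"ortz--He--Nie: $v^{-1}wv$ lies in $\tW_{J'}$ and sends the positive roots outside $\Phi_{J'}$ in a way compatible with the parabolic $vP_{J'}v^{-1}$. The G\"ortz--He--Nie theorem on $P$-alcoves then gives that every $\sigma$-conjugacy class meeting $IwI$ meets the Levi $M_{J'}$. Its image in $\pi_1(M_{J'})_\sigma$ must then equal that of $v^{-1}wv$.

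Since $\mu$ is minuscule and $y$ has full-rank action on the central torus of $M_{J'}$, this Kottwitz invariant has a nonzero (rational) slope component along $X_*(Z(M_{J'}))$. But the basic class $[\tau]$ has central Newton point. A short computation with the pairing $\langle v\alpha,y^{-1}\mu\rangle$ (using $\mu=(\frac12,\ldots,\frac12)$ on $V$) shows these cannot match. Hence $X_w(\tau)=\emptyset$.

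\emph{Non-emptiness when (i) fails but (ii) holds.} This is the step I expect to be the main obstacle. I would attack it with the quantum Bruhat graph description of generic Newton points (Mili\'cevi\'c, He, Schremmer), combined with the fact that $\{[b]\mid X_w(b)\neq\emptyset\}$ is saturated below the generic class.
\begin{enumerate}[(a)]
\item Choose $v\in\LP(w)$ realizing the minimal length in the $\sigma$-twisted class. Use He's reduction, $w\to s w\sigma(s)$ and the branching $w\to sw$, to reduce to an element of minimal length in its $\sigma$-conjugacy class, keeping track of $\LP$ under these moves.
\item For minimal-length elements, use He's theorem that $X_x(b)\neq\emptyset$ iff $[b]=[\dot x]$. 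Condition (ii) forces the reduced element to be a partial $\sigma$-Coxeter element with full $\sigma$-support in $\bS$. Such elements have central Newton point, so they lie in $[\tau]$ (the Kottwitz invariant is already fixed by $w\in W_a\tau$).
\end{enumerate}
The delicate point is to check that condition (ii) for all length-positive $v$ propagates along every branch of the reduction tree. I would try to establish this via Schremmer's ``length functional'' identities, which express $\ell(w)$ as $\sum_{\alpha}$ of the same expressions that appear in the definition of $\LP$.
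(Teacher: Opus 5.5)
The paper does not prove this statement; it quotes it from Schremmer's Proposition 6 (see also Shimada--Takao, Theorem 2.5). Your text therefore has to stand on its own as a reproof. Your treatment of case (i) is the standard argument and is fine. The other two parts have genuine gaps.

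\textbf{Emptiness when (i) and (ii) fail.} Passing from $v\in\LP(w)$ with $J'=\supp(v^{-1}yv)\subsetneq\bS$ to a $(J',v)$-alcove and then applying the $P$-alcove theorem (due to G\"ortz--Haines--Kottwitz--Reuman) is the right idea. The decisive claim, however, is not proved: that the $W_{J'}$-average of $v^{-1}\mu$ is non-central. The phrase ``full-rank action on the central torus'' has no content, since $v^{-1}yv\in W_{J'}$ acts trivially there. Nothing in your argument uses that (i) fails, and without that hypothesis the claim is false.

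Take $g=5$ and $\nu=(0,0,0,0,1)$. Then $y_\nu e_2=-e_4$ and $y_\nu e_4=-e_2$, $y_\nu$ has no other fixed vectors, and $\cC_\nu=\R(e_2-e_4)$, so (ii) fails. For every $v\in\LP(w)$ with $J'\subsetneq\bS$ one gets $V^{W_{J'}}=v^{-1}\R(e_2-e_4)$ and $\la v^{-1}\mu,v^{-1}(e_2-e_4)\ra=\la\mu,e_2-e_4\ra=0$. So there is no Kottwitz obstruction, as there must not be, because $\vph(3)=0$ and $X_w(\tau)\neq\emptyset$ by (i).

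What is actually needed is the following. For $0\neq z\in V^{W_{J'}}\cap\overline{C^+}$, the vector $x=vz$ lies in $\cC_\nu$, and $\la v^{-1}\mu,z\ra=\la\mu,x\ra=\frac12\sum_{\nu(i)=1}x_i$; this uses $y_\nu x=x$ and $y_\nu^{-1}\mu=(\nu(i)-\frac12)_i$. You must then show this quantity is positive when $\vph(\lceil g/2\rceil)\geq1$. Sketch: if it vanished, $x$ would vanish on all $i$ with $\nu(i)=1$ or $i\leq\vph(g)$. Its positive entries would then lie in the initial run of $m\leq\lceil g/2\rceil-1$ zeros. But $\pi_\nu^{-2}$ sends a positive index $j$ to the positive index $j+\vph(g+1-j)>j$ with the same value, which cannot go on forever.

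\textbf{Non-emptiness when (ii) holds.} This direction is not proved. You leave open exactly the point that matters, namely whether (ii) survives He's reduction, and you give no mechanism for it. The intermediate elements $sw$ leave $\SAdm(\mu)$, and nothing relates their length-positive sets to $\LP(w)$. The assertion that (ii) forces a partial $\sigma$-Coxeter element with central Newton point is likewise unsupported.

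Moreover, the ``fact'' you want to combine with the quantum Bruhat graph is false. Suppose $\{[b]\mid X_w(b)\neq\emptyset\}$ were always saturated below the generic class. Since $[\tau]$ is the minimum of $B(G,\mu)$, every $X_w(\tau)$ with $w\in\SAdm(\mu)$ would then be nonempty. That contradicts your own emptiness direction, and also the classical $g=3$ example $\nu=(0,1,0)$.

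The missing ingredient is the converse of the $P$-alcove theorem for basic $b$, due to G\"ortz--He--Nie: $X_w(\tau)=\emptyset$ if and only if some $(J,u)$-alcove structure on $w$ carries a Kottwitz obstruction. With it the argument runs as follows.
\begin{enumerate}[(1)]
\item Since $\kappa_G(w)=\kappa_G(\tau)$, any obstructing $J$ must be proper.
\item The alcove condition depends only on $uW_J$, and one checks that the representative can be chosen as some $v\in uW_J\cap\LP(w)$.
\item For this $v$ one has $v^{-1}\pr(w)v\in W_J$, so $\supp(v^{-1}\pr(w)v)\subseteq J\subsetneq\bS$, contradicting (ii).
\end{enumerate}
This gives the implication directly, whereas your reduction-tree strategy, as written, does not establish it.
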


For $w=\tw_\vph$, condition (i) is equivalent to $EO_\vph\subseteq\mathscr S_g$ (cf.\ \cite[Proposition 5.6]{GHN19}).

\section{Proof of the Main Theorem}
\label{criterion}
\subsection{A Non-emptiness Criterion}
\label{LP}
We identify the set $\{0,1\}^g$ with the set of elementary sequences by setting $\vph(i)=\sum_{j=1}^i\nu(j)$ for $\nu=(\nu(1),\ldots,\nu(g))\in\{0,1\}^g$.
We write
$$w_\nu=w_\vph,\qquad\tw_\nu=\tw_\vph=\tau w_\nu,\qquad EO_\nu=EO_\vph.$$
Set $y_\nu=\pr(\tw_\nu)=(1\ g+1)\cdots(g\ 2g)w_\nu\in W_0$.
The definition of $w_\vph$ in \S\ref{ADLV} gives
\begin{align}
\label{signed permutation}
y_\nu(e_i)&=
\begin{cases}
e_{\vph(i)}&\text{if }\nu(i)=1,\\
-e_{g-i+\vph(i)+1}&\text{if }\nu(i)=0.
\end{cases}
\end{align}
Define
$$
\cC_{\nu}=\left\{x\in\R^g\ \middle|\
\begin{aligned}
&y_\nu x=x,\quad x_i\geq0\ (\nu(i)=1),\\
&x_i\geq x_j\ (i<j,\ \nu(i)=1,\ \nu(j)=0)
\end{aligned}\right\}.
$$
\begin{prop}
\label{cone criterion}
We have
$$EO_\nu\cap\mathscr S_g\neq\emptyset
\quad\Longleftrightarrow\quad
\vph\left(\left\lceil\frac{g}{2}\right\rceil\right)=0\ \text{ or }\ \cC_{\nu}=\{0\}.$$
\end{prop}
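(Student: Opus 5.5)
We reduce to Theorem \ref{empty} applied to $w=\tw_\nu$, using \cite[Lemma 7.6]{GHN19} to identify $EO_\nu\cap\mathscr S_g\neq\emptyset$ with $X_{\tw_\nu}(\tau)\neq\emptyset$. Condition (i) of Theorem \ref{empty} is equivalent to $EO_\nu\subseteq\mathscr S_g$ by \cite[Proposition 5.6]{GHN19}. By \cite[Proposition 2.5(1)]{GC12}, this in turn is equivalent to $\vph(\lceil g/2\rceil)=0$. It therefore remains to show that condition (ii) holds if and only if $\cC_\nu=\{0\}$, at least when (i) fails; it is harmless to prove it in general.

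\textbf{Step 1: describe $\LP(\tw_\nu)$.} Write $\tw_\nu=\tau w_\nu=p^{\mu'}y_\nu$ with $y=y_\nu$ as in (\ref{signed permutation}); up to the central shift, the translation part is $\mu$ on $V$. The term $\la v\alpha,y^{-1}\mu\ra$ takes values in $\{-1,0,1\}$ on $V$, since $\mu=(\frac12,\ldots,\frac12)$. The inequality can only fail when $\la v\alpha,y^{-1}\mu\ra<0$, or when it is $0$ and $v\alpha>0$ while $yv\alpha<0$. I would unwind this into a combinatorial condition on $v$. The expected conclusion is that $v\in\LP$ if and only if $v$ maps the closed dominant chamber $\overline{C^+}$ into the region cut out by the two families of inequalities appearing in $\cC_\nu$: namely $x_i\ge0$ for $\nu(i)=1$, and $x_i\ge x_j$ for $i<j$ with $\nu(i)=1,\nu(j)=0$. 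Informally, the coordinates $i$ with $\nu(i)=1$ are exactly those with $y^{-1}\mu$ "positive" in direction $e_i$ via (\ref{signed permutation}). The key mechanism is that the pairs $(i,j)$ listed are exactly the roots $\alpha$ for which $\delta^+(v\alpha)-\delta^+(yv\alpha)$ cannot compensate a negative pairing. So define the closed cone $D_\nu=\{x\mid x_i\ge0\ (\nu(i)=1),\ x_i\ge x_j\ (i<j,\nu(i)=1,\nu(j)=0)\}$; the claim is $\LP(\tw_\nu)=\{v\in W_0\mid v\overline{C^+}\subseteq D_\nu\}$, or its inverse version depending on conventions.

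\textbf{Step 2: translate the support condition.} For $v\in W_0$, we have $\supp(v^{-1}yv)\neq\bS$ exactly when $v^{-1}yv$ lies in a proper standard parabolic $W_{\bS\setminus\{s_k\}}$. This happens if and only if $v^{-1}yv$ fixes the fundamental coweight $\omega_k$, a nonzero element of $\overline{C^+}$, i.e. $y$ fixes $v\omega_k$. The converse uses that for a parabolic the fixed space of an element of $W_J$ contains $\bigcap_{s\in\bS\setminus J}$-type coweights; more precisely, $u$ fixes some nonzero $x\in\overline{C^+}$ if and only if $u\in W_{\mathrm{Stab}(x)}$, a proper standard parabolic. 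Hence (ii) fails if and only if there are $v\in\LP$ and $0\neq x\in\overline{C^+}$ with $y(vx)=vx$. By Step 1, $vx\in D_\nu$, so $vx\in\cC_\nu\setminus\{0\}$. Conversely, given $0\ne z\in\cC_\nu$, write $z=vx$ with $x\in\overline{C^+}$. One must then choose $v$ in $\LP$, i.e. with $v\overline{C^+}\subseteq D_\nu$; this is possible because $D_\nu$ is a union of $W_0$-translates of $\overline{C^+}$ (its defining walls being root hyperplanes), so some chamber $v\overline{C^+}\subseteq D_\nu$ contains $z$. Then $v^{-1}yv$ fixes $x\neq0$, and (ii) fails.

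\textbf{Main obstacle.} Step 1 is the hard part: computing $\LP(\tw_\nu)$ explicitly from (\ref{signed permutation}), with careful case analysis over $\alpha=e_a\pm e_b$ and $2e_a$ and the sign of $y$ on each coordinate. I would first verify the claimed description for $\alpha$ simple after transporting by $v$, treating $\la v\alpha,y^{-1}\mu\ra=\pm1$ and $=0$ separately. I would then check that only the listed pairs impose genuine constraints, because for the others the $\delta^+$ correction term is nonnegative. If the exact equality in Step 1 is hard, it suffices to prove two weaker facts: every $v\in\LP$ maps $\overline{C^+}$ into $D_\nu$, and every chamber contained in $D_\nu$ comes from some $v\in\LP$. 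These two inclusions are all that Step 2 uses.
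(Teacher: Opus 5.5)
\textbf{Verdict: genuine gap in Step 1.} Your outline is the paper's. You reduce to Theorem \ref{empty} via \cite[Lemma 7.6]{GHN19} and \cite[Proposition 2.5(1)]{GC12}. You identify $\LP(\tw_\nu)$ with $\{v\mid v\overline{C^+}\subseteq D_\nu\}$, show $D_\nu$ is the union of these chambers, and use that $u\in W_0$ has $\supp(u)\neq\bS$ iff $u$ fixes a nonzero point of $\overline{C^+}$.

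Your Step 2 is correct. Your claim that every $z\in D_\nu$ lies in a closed chamber contained in $D_\nu$ is true and can replace the paper's minimal-length argument. However, the reason is that all roots defining $D_\nu$ are positive: perturbing $z$ by a small regular dominant vector lands in an open chamber inside $D_\nu$ whose closure contains $z$. ``The walls are root hyperplanes'' alone would not suffice.

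The gap is Step 1, which carries all the content. You only announce the ``expected conclusion'' (even hedging between $v$ and $v^{-1}$), and the mechanism you sketch is faulty in two ways.
\begin{enumerate}[(1)]
\item When $\la v\alpha,y^{-1}\mu\ra=0$, the inequality fails iff $v\alpha<0$ and $yv\alpha>0$, not the reverse.
\item Testing only simple $\alpha$ cannot work, because $\beta\mapsto\delta^+(\beta)-\delta^+(y\beta)$ is not additive. For $g=2$, $\nu=(1,0)$ (so $y_\nu=\mathrm{diag}(1,-1)$) and $v$ the sign change of $e_1$, the inequality holds for $\alpha=e_1-e_2$ and $\alpha=2e_2$. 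It fails for $\alpha=e_1+e_2$, where $v\alpha=e_2-e_1$ gives $-1+0-0<0$.
\end{enumerate}

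What is missing is the observation that makes Step 1 formal. Put $L_\nu(\beta)=\la\beta,y_\nu^{-1}\mu\ra+\delta^+(\beta)-\delta^+(y_\nu\beta)$ for all $\beta\in\Phi$. Since $\delta^+(-\beta)=1-\delta^+(\beta)$, one has $L_\nu(-\beta)=-L_\nu(\beta)$. Hence ``$L_\nu\geq0$ on $v\Phi_+$'' is equivalent to $\{\beta\mid L_\nu(\beta)>0\}\subseteq v\Phi_+$. This in turn is equivalent to $v\overline{C^+}\subseteq D$, where $D$ is cut out by $\la\beta,x\ra\geq0$ for all $\beta$ with $L_\nu(\beta)>0$.

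One then computes $L_\nu$ on $\Phi_+$ from \eqref{signed permutation}. First, $y_\nu^{-1}\mu=(\nu(i)-\frac12)_i$. For $i<j$, one has $y_\nu(e_i-e_j)>0$ unless $(\nu(i),\nu(j))=(0,1)$, and $y_\nu(e_i+e_j)>0$ unless $(\nu(i),\nu(j))=(0,0)$. These sign checks use three facts:
\begin{enumerate}[(a)]
\item $\vph(i)<\vph(j)$ if $\nu(j)=1$;
\item $g-j+\vph(j)+1<g-i+\vph(i)+1$ if $\nu(j)=0$;
\item $\vph(i)\leq\vph(g)<g-k+\vph(k)+1$ for all $i,k$.
\end{enumerate}
This yields $L_\nu(e_i-e_j)=\nu(i)(1-\nu(j))$, $L_\nu(e_i+e_j)=\nu(i)\nu(j)$ and $L_\nu(2e_i)=\nu(i)$. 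So $D=D_\nu$, the conditions $x_i+x_j\geq0$ being redundant. With this in place, your Step 2 completes the proof exactly as in the paper. Without it, the equivalence of condition (ii) of Theorem \ref{empty} with $\cC_\nu=\{0\}$ is not established.
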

\begin{proof}
For $\alpha\in\Phi$, set
$$L_{\nu}(\alpha)=\la\alpha,y_\nu^{-1}\mu\ra+\delta^+(\alpha)-\delta^+(y_\nu\alpha).$$
Since $y_\nu^{-1}\mu=(\nu(1)-\frac{1}{2},\ldots,\nu(g)-\frac{1}{2})$ on $V$, we have, for $i<j$,
\begin{align*}
L_{\nu}(e_i-e_j)&=\nu(i)(1-\nu(j)),&
L_{\nu}(e_i+e_j)&=\nu(i)\nu(j),&
L_{\nu}(2e_i)&=\nu(i).
\end{align*}
Also, $L_{\nu}(-\alpha)=-L_{\nu}(\alpha)$.
Set
\begin{align*}
D&=\{x\in V\mid \la\alpha,x\ra\geq0\text{ for all }\alpha\in\Phi\text{ with }L_\nu(\alpha)>0\}\\
&=\{(x_i)_i\in\R^g\mid x_i\geq0\text{ if }\nu(i)=1,\quad
x_i\geq x_j\text{ if }i<j,\ \nu(i)=1,\ \nu(j)=0\}.
\end{align*}
For $v\in W_0$, the definition of $\LP$ and the identity $L_\nu(-\alpha)=-L_\nu(\alpha)$ give
\begin{align*}
v\in\LP(\tw_\nu)
\Longleftrightarrow L_\nu(\alpha)\geq0\text{ for all }\alpha\in v\Phi_+
\Longleftrightarrow\{\alpha\in\Phi\mid L_\nu(\alpha)>0\}\subseteq v\Phi_+.
\end{align*}
For a root $\alpha$, we have $\alpha\in v\Phi_+$ if and only if $\la\alpha,x\ra\geq0$ for every $x\in v\overline{C^+}$.
Consequently,
$$v\in\LP(\tw_\nu)\quad\Longleftrightarrow\quad v\overline{C^+}\subseteq D.$$
Hence $\bigcup_{v\in\LP(\tw_\nu)}v\overline{C^+}\subseteq D$.
Conversely, let $x\in D$ and choose $v\in W_0$ of minimal length among those satisfying $x\in v\overline{C^+}$.
If $v\notin\LP(\tw_\nu)$, the formulas for $L_\nu$ and the criterion above give a root $\alpha\in\Phi_+$ with $L_\nu(\alpha)>0$ and $v^{-1}\alpha\in-\Phi_+$.
Since $x\in D$ and $v^{-1}x\in\overline{C^+}$, we have
$$0\leq\la\alpha,x\ra=\la v^{-1}\alpha,v^{-1}x\ra\leq0.$$
Thus $s_\alpha x=x$, where $s_\alpha$ denotes the reflection associated with $\alpha$.
Consequently, $x\in s_\alpha v\overline{C^+}$, whereas $\ell(s_\alpha v)<\ell(v)$ since $v^{-1}\alpha\in-\Phi_+$, contradicting the minimality of $\ell(v)$.
Hence $v\in\LP(\tw_\nu)$, so $x$ belongs to the union above.
Therefore,
$$D=\bigcup_{v\in\LP(\tw_\nu)}v\overline{C^+}.$$
By the definition of $\cC_\nu$, we have $\cC_{\nu}=\{x\in D\mid y_\nu x=x\}$.

For $u\in W_0$, we have $\supp(u)\subsetneq\bS$ if and only if $u$ fixes a nonzero point of $\overline{C^+}$, since the stabilizers of such points are precisely the proper standard parabolic subgroups of $W_0$.
It follows that
$$\supp(v^{-1}y_\nu v)=\bS\text{ for every }v\in\LP(\tw_\nu)\quad\Longleftrightarrow\quad\cC_{\nu}=\{0\}.$$
By \cite[Proposition 2.5(1)]{GC12}, condition (i) of Theorem \ref{empty} is equivalent to $\vph\left(\left\lceil\frac{g}{2}\right\rceil\right)=0$.
Thus the assertion follows from Theorem \ref{empty}.
\end{proof}

This criterion is much more efficient than checking all length positive elements.
Set
$$E_g=\sharp\{\nu\in\{0,1\}^g\mid \nu(1)=0,\ \cC_{\nu}\neq\{0\}\}.$$
By Proposition \ref{cone criterion}, we have
\begin{align}
\label{error bound}
0&\leq2^{g-1}-N_g\leq E_g.
\end{align}
For $g\geq3$, the type $\nu=(0,1,0,\ldots,0)$ satisfies $\vph\left(\left\lceil\frac{g}{2}\right\rceil\right)=1$ and $e_1+e_2-e_g\in\cC_\nu\setminus\{0\}$.
Thus $EO_\nu\cap\mathscr S_g=\emptyset$ by Proposition \ref{cone criterion}, and $N_g<2^{g-1}$ since $\nu(1)=0$.

\subsection{Estimating $E_g$}
\label{counting}

For $\nu\in\{0,1\}^g$, let $\pi_\nu\in\mathfrak S_g$ be determined by $y_\nu^{-1}(e_i)=\pm e_{\pi_\nu(i)}$.
Equivalently, we have
$$y_\nu^{-1}\bigl(\{i,2g+1-i\}\bigr)=\{\pi_\nu(i),2g+1-\pi_\nu(i)\}\qquad(1\leq i\leq g).$$
By \eqref{signed permutation}, we have
$$\pi_\nu(\vph(i))=i\quad(\nu(i)=1),\qquad
\pi_\nu(g+1-i+\vph(i))=i\quad(\nu(i)=0).$$
Thus $\pi_\nu(1),\ldots,\pi_\nu(\vph(g))$ list the indices $i$ with $\nu(i)=1$ in increasing order, and $\pi_\nu(\vph(g)+1),\ldots,\pi_\nu(g)$ list those with $\nu(i)=0$ in decreasing order.
Conversely, every permutation increasing on $\{1,\ldots,\vph(g)\}$ and decreasing on $\{\vph(g)+1,\ldots,g\}$ arises from a unique element of $\{0,1\}^g$ having $\vph(g)$ entries equal to $1$.
Color each index $i$ in each cycle of $\pi_\nu$ by $a$ if $i\leq\vph(g)$ and by $b$ if $i>\vph(g)$.
Each colored cycle, considered up to cyclic rotation, is called a {\it necklace}.
By \cite[Theorem 2.2]{Steinhardt10}, the resulting multiset of necklaces
determines $\nu$.

For example, let $\nu=(0,0,1,0,1,0)$, so $g=6$ and $\vph(g)=2$. Then
$$\begin{array}{c|rrrrrr}
i&1&2&3&4&5&6\\\hline
\pi_\nu(i)&3&5&6&4&2&1
\end{array}$$
The cycles of $\pi_\nu$ are $(1\ 3\ 6)$, $(2\ 5)$ and $(4)$, giving the necklaces $(abb)$, $(ab)$ and $(b)$, respectively.
A {\it run} is a maximal nonempty block of consecutive identical symbols; for a necklace, this is understood cyclically.

\begin{lemm}\label{necklace count}
Let $r\geq1$ be an integer.
The number of elements $\nu\in\{0,1\}^g$ for which
$\pi_{\nu}$ has a cycle of length at least $r$ whose colored necklace has only
even runs of $b$ is at most
$$
2^g\sum_{l=r}^{\infty}\left(\frac{1+\sqrt5}{4}\right)^l.
$$
\end{lemm}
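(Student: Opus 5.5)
The plan is a union bound over the length $l\geq r$ of the distinguished cycle, combined with an injection built from the necklace description of $\nu$ (\cite[Theorem 2.2]{Steinhardt10}). Fix $l\geq r$ and let $A_l$ be the set of pairs $(\nu,N)$ such that $N$ is the colored necklace of a cycle of $\pi_\nu$ of length $l$ whose $b$-runs are all even. The number of $\nu$ in the statement is at most $\sum_{l\geq r}\sharp A_l$. So it suffices to prove
$$\sharp A_l\leq 2^{g-l}\,c_l,$$
where $c_l$ is the number of colored necklaces of length $l$ with only even runs of $b$, and then to prove $c_l\leq\left(\frac{1+\sqrt5}{2}\right)^l$. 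Together these give $\sharp A_l\leq 2^g\left(\frac{1+\sqrt5}{4}\right)^l$.

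\textbf{Step 1: the injection.} Send $(\nu,N)$ to the pair $(M\setminus\{N\},N)$, where $M$ is the multiset of necklaces of $\nu$ and one copy of $N$ is removed. Since $M$ determines $\nu$ by \cite[Theorem 2.2]{Steinhardt10}, and $M$ is recovered from $(M\setminus\{N\},N)$, this map is injective. I then want $M\setminus\{N\}$ to be the necklace multiset of some $\nu'\in\{0,1\}^{g-l}$.

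Concretely, take the permutation of the remaining $g-l$ indices, relabeled order-preservingly within the $a$-indices and within the $b$-indices. It is again increasing on the $a$-indices and decreasing on the $b$-indices, since deleting the elements of a cycle preserves these monotonicity properties after order-preserving relabeling. It therefore arises from a unique $\nu'$ by the converse statement recorded before the lemma. I still need to check that this relabeling does not change the colored necklaces of the remaining cycles. This check, i.e.\ that the set of realizable necklace multisets is closed under deleting one necklace, is the step I expect to be the main obstacle. I would settle it by appealing to the explicit characterization of realizable multisets in \cite{Steinhardt10}, which is a condition on individual necklaces and their multiplicities and is therefore inherited by sub-multisets. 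Granting this, the first coordinate ranges over at most $2^{g-l}$ values and $N$ over $c_l$ values.

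\textbf{Step 2: bounding $c_l$.} A necklace with only even $b$-runs is either the all-$b$ necklace, which exists only for $l$ even, or it contains an $a$. In the second case, rotate it so that it begins with $a$. The remaining $l-1$ symbols then form a linear word in which every maximal block of $b$'s has even length. Such a word is a tiling of a strip of length $l-1$ by monomers $a$ and dimers $bb$, and there are $F_l\leq\phi^{l-1}$ of them, where $\phi=\frac{1+\sqrt5}{2}$ and $F$ is the Fibonacci sequence. Hence
$$c_l\leq\phi^{l-1}+1\leq\phi^l\qquad(l\geq2),$$
because $\phi^{l-1}(\phi-1)=\phi^{l-2}\geq1$. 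For $l=1$ the only such necklace is $(a)$, so $c_1=1\leq\phi$.

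Summing $\sharp A_l\leq 2^{g-l}\phi^l=2^g\left(\frac{1+\sqrt5}{4}\right)^l$ over $l\geq r$ gives the stated bound; the series converges because $\frac{1+\sqrt5}{4}<1$.
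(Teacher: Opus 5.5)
Your proof is correct and is essentially the paper's argument: a union bound over $l$, an injection $\nu\mapsto(\text{necklace of the cycle},\nu')$ with $\nu'\in\{0,1\}^{g-l}$ (your $\nu'$ is exactly $\nu$ with the entries indexed by the cycle deleted, as in the paper), and a Fibonacci bound on the number of admissible necklaces, where your count after rotating to start with $a$ is a harmless variant of the paper's. The step you flag as the main obstacle does not need a characterization of realizable multisets from \cite{Steinhardt10}: if $T$ is the set of remaining indices and $\rho\colon T\to\{1,\ldots,g-l\}$ is the order-preserving relabeling, then the new permutation is $\rho\circ\pi_\nu|_T\circ\rho^{-1}$ and $\rho$ sends $a$-indices to $a$-indices and $b$-indices to $b$-indices, so its colored cycles are just the $\rho$-images of the remaining colored cycles of $\pi_\nu$.
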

\begin{proof}
For $l\geq0$, let $c_l$ denote the number of elements of $\{a,b\}^l$ obtained by concatenating $a$ and $bb$.
Since $c_0=c_1=1$ and $c_l=c_{l-1}+c_{l-2}$ for $l\geq2$, we obtain $c_l\leq\left(\frac{1+\sqrt5}{2}\right)^l$ by induction on $l$.
Every necklace of length $l$ with only even runs of $b$ can be read as a concatenation of $a$ and $bb$, so there are at most $c_l$ such necklaces.

Fix $r\leq l\leq g$.
For each $\nu$ for which $\pi_\nu$ has such a cycle of length $l$, choose one such cycle $\gamma$ and form $\nu'\in\{0,1\}^{g-l}$ by deleting the entries $\nu(i)$ with $i$ in $\gamma$.
As noted above, the map sending $\nu$ to the pair consisting of the necklace of $\gamma$ and $\nu'$ is injective.
There are at most $c_l2^{g-l}$ such pairs, so the number of such $\nu$ is at most $$c_l2^{g-l}\leq\left(\frac{1+\sqrt5}{2}\right)^l2^{g-l}=2^g\left(\frac{1+\sqrt5}{4}\right)^l.$$
The assertion follows by summing over $l\geq r$.
\end{proof}

\begin{lemm}\label{long cycle}
Suppose that $\nu(1)=0$, that $\cC_{\nu}\neq\{0\}$, and that every run of $0$ in
$\nu$ has length at most $M<g$. Then $\pi_{\nu}$ has a cycle of length at least
$$
\frac{\log(g-M+2)}{\log(M+1)}
$$
whose colored necklace has only even runs of $b$.
\end{lemm}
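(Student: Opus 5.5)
The plan is to read off, from a nonzero $x\in\cC_\nu$, a cycle of $\pi_\nu$ carrying nonzero coordinates of $x$, and then to prove two things about it: its necklace has only even runs of $b$, and it is long. The first part is local sign bookkeeping. The second is a growth estimate along the cycle that uses the hypothesis that runs of $0$ have length at most $M$.

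\emph{Step 1 (fixed vectors and runs).} By \eqref{signed permutation}, $y_\nu x=x$ says that along each cycle of $\pi_\nu$ the absolute values $|x_k|$ are constant. The sign flips exactly at the steps $k\mapsto\pi_\nu(k)$ where the index involved has $\nu=0$, i.e.\ at the letters $b$ of the colored necklace. The indices reached from $a$-positions are indices $i$ with $\nu(i)=1$, and there $x_i\geq0$.

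Now take a cycle $\gamma$ on which $x$ is nonzero and which contains at least one $a$. Going once around a maximal run of $b$'s, we start and end at coordinates constrained to be $\geq0$. An odd run would therefore force a sign change between two nonnegative values of the same absolute value, so $x|_\gamma=0$. Hence every $b$-run of $\gamma$ has even length.

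If $\gamma$ consists only of $b$'s, fixedness already forces even length, which is the single run. In this case I instead use the inequalities $x_i\geq x_j$ for $i<j$ with $\nu(i)=1,\nu(j)=0$. Pick $j\in\gamma$ with $x_j>0$. Either $x_i>0$ for some $i$ with $\nu(i)=1$, so we may pass to a cycle containing an $a$, or $j$ lies before the first $1$ of $\nu$. Since $\nu(1)=0$ and runs of $0$ have length at most $M$, the latter means $j\leq M$. I expect this case to be either absorbed into the length estimate or ruled out by the maximality choice in Step 2.

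\emph{Step 2 (choosing the cycle).} Let $m=\max_k|x_k|>0$. I will choose $\gamma$ to be a cycle on which $|x|=m$, preferring one that contains an index $i$ with $\nu(i)=1$. The inequalities $x_i\geq x_j$ then propagate: any $j$ with $\nu(j)=0$, $x_j=m$ lying after a $1$-index $i$ forces $x_i\geq m$, hence $x_i=m$. Using this, I aim to show that $\gamma$ contains both a "small'' index (at most about $M+1$, coming from $\nu(1)=0$ and the first run of zeros) and a "large'' index (at least about $g-M$). The large index comes from the last run of zeros, via $\pi_\nu(g+1-i+\vph(i))=i$ for $\nu(i)=0$.

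\emph{Step 3 (growth bound, the main obstacle).} Compare consecutive indices $k\mapsto\pi_\nu(k)$ along $\gamma$. For $\nu(i)=1$ we have $\pi_\nu(\vph(i))=i$. Since every run of $0$ has length at most $M$, $\vph(i)\geq i/(M+1)$, roughly. So an $a$-step multiplies the index by at most $M+1$. For $b$-steps the formula $\pi_\nu(g+1-i+\vph(i))=i$ relates $i$ to its distance from $g$, and two consecutive $b$'s, which come in pairs by Step 1, should compose to a similar multiplicative control on a suitably shifted index, like $g+2-k$ or $k+1$.

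If each letter (or pair of letters) changes the appropriate quantity by a factor of at most $M+1$, then travelling along $\gamma$ from the small index to the large one gives $(M+1)^{L}\geq g-M+2$. This yields $L\geq\log(g-M+2)/\log(M+1)$.

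Making the $b$-pair estimate precise, and choosing the right normalization of the index so that both $a$- and $bb$-steps are controlled uniformly, is where I expect the real work. The additive constants, which produce the $+2$ and the $-M$, will need care. I would first test the estimate on small examples such as $\nu=(0,1,0,\ldots,0)$ before fixing the normalization.
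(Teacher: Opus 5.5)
Your Step 1 is correct: on a cycle carrying a nonzero part of $x$, every $b$-run is even, and an all-$b$ cycle is even outright, so that case needs no separate treatment. The gap is that Steps 2 and 3, which carry the whole length estimate, are only announced as aims. Two ideas that make them work are missing.

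The first missing idea is the choice of cycle. Take $j$ to be the \emph{smallest} index with $x_j>0$. Such an index exists, since $x_i<0$ forces $i>\vph(g)$ and then $x_{\pi_\nu(i)}=-x_i>0$.
\begin{itemize}
\item If $\nu(j)=1$, then $x_{\vph(j)}=x_j>0$ with $\vph(j)<j$ (because $\nu(1)=0$), contradicting minimality.
\item If some $i<j$ had $\nu(i)=1$, then $x_i\geq x_j>0$, again a contradiction.
\end{itemize}
Hence $\nu(1)=\cdots=\nu(j)=0$, so $j\leq M$, $\vph(j)=0$ and $\pi_\nu(g-j+1)=j$. The cycle through $j$ therefore contains the small index $j$ and, as the immediate predecessor of $j$, the large index $g-j+1$. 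Your rule (a cycle with $|x|=m$, preferably meeting $\{i:\nu(i)=1\}$) is not shown to produce a cycle through a first-run index. Passing to another cycle containing an $a$, as you suggest for the all-$b$ case, would throw exactly this property away.

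The second missing idea is the bound for $b$-steps; no pairing of consecutive $b$'s or shifted normalization is needed. You already have $\pi_\nu(i)\leq(M+1)i$ for $i\leq\vph(g)$. For $i>\vph(g)$ the bound is trivial. The sequence $\nu$ has $\vph(g)$ ones and at most $\vph(g)+1$ runs of $0$, each of length at most $M$, so $g\leq\vph(g)+M(\vph(g)+1)<(M+1)(\vph(g)+1)\leq(M+1)i$. Hence $\pi_\nu(i)\leq g<(M+1)i$, and so $\pi_\nu(i)\leq(M+1)i$ for every $i$. Iterating along the cycle of $j$, of length $l$, gives $g-j+1=\pi_\nu^{l-1}(j)\leq(M+1)^{l-1}j$. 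Therefore $g-M+2\leq g-j+2\leq(M+1)^{l-1}M+1\leq(M+1)^l$. Together with your Step 1 applied to this cycle, that is the lemma; without these two observations your plan does not yet reach the stated bound.
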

\begin{proof}
Choose $0\neq x\in\cC_\nu$. Since $\nu(\pi_\nu(i))=1$ for $i\leq\vph(g)$, \eqref{signed permutation} and the definition of $\cC_\nu$ give
$$
x_{\pi_\nu(i)}=
\begin{cases}
x_i&i\leq\vph(g),\\
-x_i&i>\vph(g),
\end{cases}
\qquad x_i\geq0\quad(i\leq\vph(g)).
$$
Some coordinate is positive, since $x_i<0$ implies $i>\vph(g)$ and $x_{\pi_\nu(i)}=-x_i>0$.
Let $j$ be the smallest index with $x_j>0$.
Since $\nu(1)=0$, we have $\vph(j)<j$.
If $\nu(j)=1$, then $y_\nu x=x$ implies $x_{\vph(j)}=x_j>0$, a contradiction.
Hence $\nu(j)=0$.
If $i<j$ and $\nu(i)=1$, then $\nu(j)=0$ implies $x_i\geq x_j>0$, a contradiction.
Thus $\nu(1)=\cdots=\nu(j)=0$, so $j\leq M$ and $\pi_\nu(g-j+1)=j$.

Let $l$ be the length of the cycle of $\pi_\nu$ containing $j$.
For each index $i$ in this cycle, we have $x_i=\pm x_j$, and $x_i=x_j>0$ if $i\leq\vph(g)$.
If its necklace is of the form
$$(\cdots a\,\underbrace{b\cdots b}_{t}\,a\cdots)
\quad\text{or}\quad
(a\,\underbrace{b\cdots b}_{t}),$$
then $x_{\pi_\nu(i)}=-x_i$ for $i>\vph(g)$ gives $x_j=(-1)^t x_j$, so $t$ is even.
If every index $i$ in this cycle satisfies $i>\vph(g)$, then $x_j=(-1)^l x_j$, hence $l$ is even.

For $i\leq\vph(g)$, at most $i$ runs of $0$ precede $\nu(\pi_\nu(i))$, and each run has length at most $M$.
Hence $\pi_\nu(i)\leq (M+1)i$.
For $i>\vph(g)$, we have $\pi_\nu(i)\leq g\leq\vph(g)+M(\vph(g)+1)<(M+1)i$, since $\nu$ has at most $\vph(g)+1$ runs of $0$.
We substitute $i=j,\pi_\nu(j),\ldots,\pi_\nu^{l-2}(j)$ successively into $\pi_\nu(i)\leq(M+1)i$ and use $\pi_\nu^{l-1}(j)=g-j+1$ and $j\leq M$ to obtain
$$
g-M+2\leq\pi_\nu^{l-1}(j)+1
\leq(M+1)^{l-1}j+1\leq(M+1)^l,
$$
which proves the assertion.
\end{proof}

\begin{theo}
\label{asymptotic}
We have
$$N_g\sim 2^{g-1}\quad\text{and}\quad\lim_{g\to\infty}\frac{N_{g+1}}{N_g}=2.$$
\end{theo}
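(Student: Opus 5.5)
By \eqref{error bound}, it suffices to show $E_g=o(2^{g})$. Indeed, then $N_g=2^{g-1}-o(2^{g-1})$, so $N_g\sim2^{g-1}$. The ratio statement follows because $N_{g+1}/N_g=(2^{g}(1+o(1)))/(2^{g-1}(1+o(1)))\to2$. Fix $\varepsilon>0$. We split the $\nu$ counted by $E_g$ according to whether $\nu$ has a long run of $0$ or not, using a threshold $M=M(g)$ to be chosen.

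\emph{Step 1: $\nu$ with a run of $0$ of length $>M$.} The number of $\nu\in\{0,1\}^g$ containing a run of $0$ of length at least $M+1$ is at most $g\cdot2^{g-M-1}$: choose the starting position of the run and fix $M+1$ entries to be $0$. So this contribution is $o(2^g)$ as long as $M-\log_2 g\to\infty$. I would take, for instance, $M=\lceil 2\log_2 g\rceil$, which gives the bound $g\,2^{g-M-1}\leq 2^{g}/g$.

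\emph{Step 2: all runs of $0$ have length $\leq M$.} Now $M<g$ for $g$ large. Lemma \ref{long cycle} shows that every such $\nu$ with $\nu(1)=0$ and $\cC_\nu\neq\{0\}$ has a cycle of $\pi_\nu$ of length at least
$$r(g)=\left\lceil\frac{\log(g-M+2)}{\log(M+1)}\right\rceil$$
whose colored necklace has only even runs of $b$. Lemma \ref{necklace count} then bounds the number of such $\nu$ by
$$2^g\sum_{l\geq r(g)}\left(\frac{1+\sqrt5}{4}\right)^l=2^g\,O\!\left(\left(\frac{1+\sqrt5}{4}\right)^{r(g)}\right),$$
since $\frac{1+\sqrt5}{4}<1$ and the geometric tail is controlled by its first term. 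With $M\approx2\log_2 g$ we get $r(g)\approx\log g/\log\log g\to\infty$, so this contribution is also $o(2^g)$.

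Adding the two contributions gives $E_g=o(2^g)$, which proves both claims. The only delicate point is the choice of $M$. It must be large enough that Step 1 is negligible, i.e.\ $M-\log_2 g\to\infty$, and small enough that $\log g/\log M\to\infty$ in Step 2. Any $M$ with $\log_2 g\ll M\leq g^{o(1)}$ works, and $M\sim2\log_2 g$ satisfies both. Everything else is an application of the two lemmas together with the elementary union bound in Step 1.
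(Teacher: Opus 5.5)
Your proposal is correct and is essentially the paper's own proof. You bound the $\nu$ with a run of $0$ longer than $M\approx c\log_2 g$ by a union bound, and you handle the remaining $\nu$ with Lemma \ref{long cycle} followed by Lemma \ref{necklace count}; this gives $E_g=o(2^{g-1})$, and \eqref{error bound} then yields both assertions. The differences are only cosmetic: you take $M=\lceil 2\log_2 g\rceil$ instead of $\lceil 3\log_2 g\rceil$, use the slightly sharper count $g2^{g-M-1}$, and take a ceiling rather than a floor for $r$, which is legitimate since cycle lengths are integers.
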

\begin{proof}
Set $M=\left\lceil3\log_2 g\right\rceil$ and
$r=\left\lfloor\frac{\log(g-M+2)}{\log(M+1)}\right\rfloor$.
For $g$ sufficiently large, we have $M<g$ and $r\geq1$.
There are at most $g2^{g-M}$ elements of $\{0,1\}^g$ containing a run of $0$ longer than $M$.
By Lemma \ref{necklace count} and Lemma \ref{long cycle}, we have
$$
\frac{E_g}{2^{g-1}}
\leq g2^{1-M}
+2\sum_{l=r}^{\infty}\left(\frac{1+\sqrt5}{4}\right)^l
\longrightarrow0\quad\text{as }g\to\infty,
$$
since $r\to\infty$ and $\frac{1+\sqrt5}{4}<1$.
Thus the first assertion follows from \eqref{error bound}.
The second assertion follows immediately from the first.
\end{proof}

Table \ref{values} below gives the values of $N_g$ for $2\leq g\leq38$.

\clearpage
\begingroup
\renewcommand{\arraystretch}{1.08}
\setlength{\LTcapwidth}{\textwidth}
\begin{longtable}{r|rrrr}
$g$ & $2^{g-1}$ & $N_g$ & $N_g/N_{g-1}$ & $N_g/2^{g-1}$ \\[5pt]
\hline
\endhead
2 & 2 & 2 & 2.000000 & 100.0000\% \\
3 & 4 & 3 & 1.500000 & 75.0000\% \\
4 & 8 & 6 & 2.000000 & 75.0000\% \\
5 & 16 & 11 & 1.833333 & 68.7500\% \\
6 & 32 & 22 & 2.000000 & 68.7500\% \\
7 & 64 & 44 & 2.000000 & 68.7500\% \\
8 & 128 & 89 & 2.022727 & 69.5312\% \\
9 & 256 & 181 & 2.033708 & 70.7031\% \\
10 & 512 & 370 & 2.044199 & 72.2656\% \\
11 & 1,024 & 753 & 2.035135 & 73.5352\% \\
12 & 2,048 & 1,538 & 2.042497 & 75.0977\% \\
13 & 4,096 & 3,137 & 2.039662 & 76.5869\% \\
14 & 8,192 & 6,381 & 2.034109 & 77.8931\% \\
15 & 16,384 & 12,981 & 2.034321 & 79.2297\% \\
16 & 32,768 & 26,360 & 2.030660 & 80.4443\% \\
17 & 65,536 & 53,454 & 2.027845 & 81.5643\% \\
18 & 131,072 & 108,291 & 2.025873 & 82.6195\% \\
19 & 262,144 & 219,106 & 2.023308 & 83.5823\% \\
20 & 524,288 & 442,858 & 2.021204 & 84.4685\% \\
21 & 1,048,576 & 894,279 & 2.019336 & 85.2851\% \\
22 & 2,097,152 & 1,804,217 & 2.017510 & 86.0318\% \\
23 & 4,194,304 & 3,637,196 & 2.015942 & 86.7175\% \\
24 & 8,388,608 & 7,327,241 & 2.014530 & 87.3475\% \\
25 & 16,777,216 & 14,751,328 & 2.013217 & 87.9248\% \\
26 & 33,554,432 & 29,680,836 & 2.012079 & 88.4558\% \\
27 & 67,108,864 & 59,689,462 & 2.011044 & 88.9442\% \\
28 & 134,217,728 & 119,982,723 & 2.010116 & 89.3941\% \\
29 & 268,435,456 & 241,079,923 & 2.009289 & 89.8093\% \\
30 & 536,870,912 & 484,220,231 & 2.008546 & 90.1930\% \\
31 & 1,073,741,824 & 972,255,881 & 2.007880 & 90.5484\% \\
32 & 2,147,483,648 & 1,951,593,118 & 2.007283 & 90.8781\% \\
33 & 4,294,967,296 & 3,916,352,287 & 2.006746 & 91.1847\% \\
34 & 8,589,934,592 & 7,857,235,927 & 2.006264 & 91.4703\% \\
35 & 17,179,869,184 & 15,760,271,446 & 2.005829 & 91.7369\% \\
36 & 34,359,738,368 & 31,606,224,469 & 2.005437 & 91.9862\% \\
37 & 68,719,476,736 & 63,373,059,588 & 2.005082 & 92.2199\% \\
38 & 137,438,953,472 & 127,047,791,180 & 2.004760 & 92.4394\% \\
\caption{The number of Ekedahl-Oort strata intersecting $\mathscr S_g$.}
\label{values}
\end{longtable}
\endgroup
\clearpage

\bibliographystyle{myamsplain}
\bibliography{reference}
\end{document}